\newtheorem{theorem}{Theorem}[section]
\newtheorem{prop}[theorem]{Proposition}
\theoremstyle{definition}
\theoremstyle{remark}
\numberwithin{equation}{section}
\let \O=\Omega
\begin{document}
\title[A note on weighted bounds]
{A note on weighted bounds for rough singular integrals}

\author{Andrei K. Lerner}
\address{Department of Mathematics,
Bar-Ilan University, 5290002 Ramat Gan, Israel}
\email{lernera@math.biu.ac.il}

\thanks{The author was supported by ISF grant No. 447/16 and ERC Starting Grant No. 713927.}

\begin{abstract}
We show that the $L^2(w)$ operator norm of the composition $M\!\circ T_{\O}$, where $M$ is the maximal operator and $T_{\O}$ is a rough homogeneous singular integral with
angular part $\O\in L^{\infty}(S^{n-1})$, depends quadratically on $[w]_{A_2}$, and this dependence is sharp.
\end{abstract}

\keywords{Rough singular integrals, sharp weighted bounds.}
\subjclass[2010]{42B20, 42B25}

\maketitle

\section{Introduction}
Consider a class of rough homogeneous singular integrals defined by
$$T_{\Omega}f(x)=\text{p.v.}\int_{{\mathbb R}^n}f(x-y)\frac{\O(y/|y|)}{|y|^n}dy,$$
with $\O\in L^{\infty}(S^{n-1})$ and having zero average over the sphere.

In \cite{HRT}, Hyt\"onen, Roncal and Tapiola proved that
\begin{equation}\label{quant}
\|T_{\O}\|_{L^2(w)\to L^2(w)}\le C_n\|\O\|_{L^{\infty}}[w]_{A_2}^2,
\end{equation}
where $[w]_{A_2}=\sup_Q\frac{\int_Qw\int_Qw^{-1}}{|Q|^2}$.
Different proofs of this result, via a sparse domination, were given by Conde-Alonso, Culiuc, Di Plinio and Ou \cite{CCPO},
and by the author \cite{L}. Recently (\ref{quant}) was extended to maximal singular integrals by
Di Plinio, Hyt\"onen and Li \cite{DHL}.

It was conjectured in \cite{HRT} that the quadratic dependence on $[w]_{A_2}$ in (\ref{quant}) can be improved to the linear one.
In this note we obtain a strengthening of (\ref{quant}), which, at some point, supports this conjecture.

\begin{theorem}\label{sharpbound} For every $w\in A_2$, we have
\begin{equation}\label{comp}
\|M\!\circ\!T_{\O}\|_{L^2(w)\to L^2(w)}\le C_n\|\O\|_{L^{\infty}}[w]_{A_2}^2,
\end{equation}
and this bound is optimal, in general.
\end{theorem}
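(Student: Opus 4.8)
My plan is to treat the two assertions of Theorem~\ref{sharpbound} separately: the upper bound via sparse domination, and the optimality via an explicit power-weight example. The first thing to note is that the naive route fails: bounding $\|M\!\circ\!T_\Omega\|_{L^2(w)}$ by $\|M\|_{L^2(w)}\,\|T_\Omega\|_{L^2(w)}$ and inserting Buckley's estimate $\|M\|_{L^2(w)\to L^2(w)}\lesssim_n[w]_{A_2}$ together with \eqref{quant} only yields the exponent $3$. The whole content of \eqref{comp} is therefore that the weighted costs of $T_\Omega$ and of the subsequent $M$ do \emph{not} accumulate additively in the exponent, and a finer argument is needed.

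For the upper bound I would start from the sparse domination of rough singular integrals from \cite{CCPO,L}: for all nice $f,g$ and each $1<r<\infty$ there is a sparse family $\mathcal S$ with
\[
|\langle T_\Omega f,g\rangle|\le C_n\|\Omega\|_{L^\infty}\,r'\sum_{Q\in\mathcal S}|Q|\,\langle f\rangle_{1,Q}\,\langle g\rangle_{r,Q}.
\]
Dualizing the maximal operator and using its own Carleson/sparse domination $Mh\lesssim\sum_{P}\langle h\rangle_{1,P}\mathbf 1_{P}$, the quantity $\int M(T_\Omega f)\,g\,w$ reduces to controlling the averages $\langle T_\Omega f\rangle_{1,P}$, which are re-expanded through the same rough sparse form applied locally. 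The outcome I expect is that $M\!\circ\!T_\Omega$ is dominated by a sparse form of the same $(1,r)$ type as $T_\Omega$ itself, whence the sharp weighted bound for such forms---optimized in $r$ exactly as in the proof of \eqref{quant}, taking $r-1\sim 1/\log[w]_{A_2}$ and paying a factor $r'\sim\log[w]_{A_2}$---reproduces the exponent $2$ rather than $3$. Equivalently, I would aim at the refined estimate $\|M\!\circ\!T_\Omega\|_{L^2(w)}\lesssim\|\Omega\|_{L^\infty}[w]_{A_2}\big([w]_{A_\infty}+[w^{-1}]_{A_\infty}\big)$ and finish with $[w]_{A_\infty},[w^{-1}]_{A_\infty}\le[w]_{A_2}$.

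For optimality I would exhibit one fixed odd $\Omega\in L^\infty(S^{n-1})$ of zero average, a family of power weights $w_a(x)=|x|^{a}$ with $a\uparrow n$ (so that $[w_a]_{A_2}\sim(n-a)^{-1}\to\infty$), and test functions $f_a$ for which $\|M(T_\Omega f_a)\|_{L^2(w_a)}/\|f_a\|_{L^2(w_a)}\gtrsim[w_a]_{A_2}^2$. The key structural remark is that $M(T_\Omega f)\ge|T_\Omega f|$ pointwise, so the trivial lower bound only inherits the known, merely \emph{linear} lower bound for $T_\Omega$; the extra factor of $[w]_{A_2}$ must come from the spreading action of $M$. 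I would therefore choose $f_a$ simultaneously as a near-extremizer for the linear weighted lower bound of $T_\Omega$---so that $T_\Omega f_a$ is, on a large annulus, comparable to a single-signed power of $|x|$---and so that this single-signed output is itself a near-extremizer for Buckley's bound for $M$, the maximal function filling in precisely the region where $w_a$ is large. Multiplying the two gains then produces the quadratic lower bound.

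The step I expect to be most delicate is the upper bound: showing that inserting $M$ does not push the exponent from $2$ to $3$. Concretely, the difficulty is that the averages $\langle T_\Omega f\rangle_{1,P}$ manufactured by the maximal operator must be fed back through the rough sparse form \emph{without} losing the fine $r'$-dependence, so that the single optimization in $r$ still closes at the quadratic level; keeping the two sparse structures compatible, rather than composing their operator norms, is the crux. On the optimality side, the main technical point is complementary: one must produce a \emph{single} rough $\Omega$, subject to the mean-zero constraint, for which one test function realizes the singular-integral and the maximal extremal behavior at the same time.
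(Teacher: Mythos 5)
Your upper-bound argument has a genuine gap, and it sits exactly at the point you yourself call the crux. Dualizing $M$ through its Carleson/sparse domination does reduce $\int M(T_{\Omega}f)\,h$ to the averages $\langle T_{\Omega}f\rangle_{1,P}$, and after splitting $f=f\chi_{3P}+f\chi_{{\mathbb R}^n\setminus 3P}$ the local part is harmless: by the $L\log L$ bound \eqref{llog}, $\langle T_{\Omega}(f\chi_{3P})\rangle_{1,P}\lesssim \inf_{P}MMf$, and $\|MMf\|_{L^2(w)}\lesssim [w]_{A_2}^2\|f\|_{L^2(w)}$ is already of the right quadratic order. But ``re-expanding through the same rough sparse form applied locally'' only ever sees these local pieces. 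The nonlocal pieces are governed by the grand maximal truncated operator $M_{1,T_{\Omega}}f(x)=\sup_{Q\ni x}\langle T_{\Omega}(f\chi_{{\mathbb R}^n\setminus 3Q})\rangle_{1,Q}$, and the problem becomes precisely to prove $\|M_{1,T_{\Omega}}\|_{L^2(w)\to L^2(w)}\lesssim [w]_{A_2}^2$. This is \emph{not} a formal consequence of the sparse bound for $T_{\Omega}$: the sparse-domination machinery (\cite[Cor.~3.2]{L}, packaged as Proposition~\ref{pc1}) must be applied to $M_{1,T_{\Omega}}$ as a new operator, and its hypothesis demands a weak $(1,1)$ bound, with norm $O(p)$, for the grand maximal operator of $M_{1,T_{\Omega}}$ itself, that is, for $M_{p,(M_{1,T_{\Omega}})}$. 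The missing ingredient --- the actual new content of the paper --- is the pointwise self-improvement estimate \eqref{point2}, $M_{p,(M_{1,T_{\Omega}})}f\lesssim Mf+M_{p,T_{\Omega}}f$ for $p\ge 2$, proved by a direct geometric comparison of cubes; combined with $\|M_{p,T_{\Omega}}\|_{L^1\to L^{1,\infty}}\lesssim p$ from \eqref{mp}, it closes the loop. Your proposal contains no substitute for this step: ``I expect $M\!\circ\!T_{\Omega}$ to be dominated by a sparse form of the same $(1,r)$ type'' is exactly the assertion that needs proof, and the obstruction is not the bookkeeping of the $r'$-dependence but the nonlocal term, which no amount of local re-expansion reaches.

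On sharpness your route differs from the paper's and is viable. The paper invokes the Luque--P\'erez--Rela extrapolation of lower bounds \cite{LPR}: quadratic sharpness in $[w]_{A_2}$ follows from the unweighted blow-up $\|M\!\circ\!T_{\Omega}\|_{L^p\to L^p}\ge c(p-1)^{-2}$ as $p\to 1$, checked for the Hilbert transform, so no weight examples are needed at all. Your power-weight construction also works, and the compatibility you worry about (one $\Omega$, one test function serving both extremal behaviors) is automatic in the model case: in dimension one take $w(x)=|x|^{1-\delta}$, so $[w]_{A_2}\sim \delta^{-1}$, and $f(x)=x^{\delta-1}\chi_{(0,1)}(x)$; then $|Hf(x)|\ge \frac{1}{2\delta}|x|^{\delta-1}$ on $(-1,0)$, and applying $M$ transfers this bump back to $(0,1)$ with a second factor $\delta^{-1}$, namely $M(Hf)(x)\gtrsim \delta^{-2}x^{\delta-1}$ there, giving $\|M(Hf)\|_{L^2(w)}\gtrsim \delta^{-2}\|f\|_{L^2(w)}\sim [w]_{A_2}^2\|f\|_{L^2(w)}$. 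So the optimality half of your plan is sound (and more concrete than the paper's); it is the upper bound where the proposal does not close.
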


Here $M$ denotes the standard Hardy-Littlewood maximal operator. Notice that $\|M\|_{L^2(w)\to L^2(w)}\lesssim [w]_{A_2}$,
and this bound is sharp~\cite{B}. Therefore, (\ref{comp}) cannot be obtained via a simple combination of the sharp linear bound for $M$ with (\ref{quant}). The proof
of (\ref{comp}) is based essentially on the technique introduced in \cite{L}.

\section{Preliminaries}
Recall that a family of cubes ${\mathcal S}$ is called sparse if there exists $0<\alpha<1$ such that for every
$Q\in {\mathcal S}$, one can find a measurable set $E_Q\subset Q$ with $|E_Q|\ge \alpha|Q|$, and the sets
$\{E_Q\}_{Q\in {\mathcal S}}$ are pairwise disjoint.

Given a sublinear operator $T$, define the maximal operator $M_{p,T}$ by
$$M_{p,T}f(x)=\sup_{Q\ni x}\left(\frac{1}{|Q|}\int_Q|T(f\chi_{{\mathbb R}^n\setminus 3Q})|^pdy\right)^{1/p}.$$
Denote $\langle f\rangle_{p,Q}=\left(\frac{1}{|Q|}\int_Q|f|^p\right)^{1/p}$.

\begin{prop}\label{pc1}
Assume that $T$ and $M_{p,T}$ are of weak type $(1,1)$ and, moreover, $\|M_{p,T}\|_{L^1\to L^{1,\infty}}\le Kp$ for all $p\ge 2$.
Then
\begin{equation}\label{qb}
\|T\|_{L^2(w)\to L^2(w)}\le C_n(\|T\|_{L^1\to L^{1,\infty}}+K)[w]_{A_2}^2.
\end{equation}
\end{prop}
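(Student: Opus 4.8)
The plan is to reduce the weighted estimate to a composition of two sparse bounds, each linear in $[w]_{A_2}$, so that the quadratic dependence emerges as their product. Concretely, I would first produce a sparse domination of $T$ in which the ``local'' part is controlled by the weak type $(1,1)$ norm of $T$ and the ``tail'' part is measured by $M_{p,T}$; I would then bound $M_{p,T}$ itself on $L^2(w)$ by a second, linear, sparse estimate. Throughout I would fix $p=2$; the full range $p\ge 2$ is needed only to handle an auxiliary grand maximal operator in the second step.

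For the sparse domination of $T$ I follow the technique of \cite{L}. Fix a bounded, compactly supported $f$ and a large cube $Q_0$, and apply the local mean oscillation formula to $T(f\chi_{3Q_0})$ to obtain a sparse family $\mathcal{S}\subset \mathcal{D}(Q_0)$ and, for a.e.\ $x\in Q_0$,
\[
|T(f\chi_{3Q_0})(x)|\le c_n\sum_{Q\in\mathcal S}\Big(\omega_\la\big(T(f\chi_{3Q});Q\big)+\omega_\la\big(T(f\chi_{3Q_0\setminus 3Q});Q\big)\Big)\chi_Q(x)+|m_{Q_0}|,
\]
where $\omega_\la$ is the rearrangement oscillation and $m_{Q_0}$ a median. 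The first oscillation is estimated by the weak type $(1,1)$ of $T$, giving $\|T\|_{L^1\to L^{1,\infty}}\langle f\rangle_{1,3Q}$; the second, being the oscillation on $Q$ of $T$ applied to a function supported off $3Q$, is controlled by $\langle T(f\chi_{\mathbb R^n\setminus 3Q})\rangle_{p,Q}\le \inf_{y\in Q}M_{p,T}f(y)$. Exhausting $\mathbb R^n$ by cubes $Q_0$ and letting the medians tend to $0$, I obtain the pointwise bound
\[
|Tf(x)|\le c_n\|T\|_{L^1\to L^{1,\infty}}\,\mathcal A_{\mathcal S}f(x)+c_n\,\mathcal A_{\mathcal S}\big(M_{p,T}f\big)(x),
\]
with $\mathcal A_{\mathcal S}g=\sum_{Q\in\mathcal S}\langle g\rangle_{1,3Q}\chi_Q$. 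Taking $L^2(w)$ norms and using the classical sharp bound $\|\mathcal A_{\mathcal S}\|_{L^2(w)\to L^2(w)}\le c_n[w]_{A_2}$, and noting $[w]_{A_2}\le [w]_{A_2}^2$, the whole matter reduces to proving $\|M_{p,T}\|_{L^2(w)\to L^2(w)}\le cKp\,[w]_{A_2}$.

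For this weighted estimate I would run the same sparse machinery with the sublinear maximal operator $M_{p,T}$ in the role of $T$. By hypothesis its weak type $(1,1)$ norm is at most $Kp$, so it remains only to control the associated grand maximal operator $\esssup_{\xi\in Q}M_{p,T}(g\chi_{\mathbb R^n\setminus 3Q})(\xi)$. Truncating the input outside $3Q$ perturbs each inner $L^p$ average by a quantity governed either by a further $M_{p',T}g$ with $p'\ge p$ or by the Hardy--Littlewood maximal function of $g$, which is exactly where the hypothesis for all $p\ge 2$ is used. Granting that this grand maximal operator is of weak type $(1,1)$ with constant $\lesssim Kp$, the sparse domination yields $M_{p,T}g\le c_nKp\,\mathcal A_{\mathcal S'}g$ pointwise, hence $\|M_{p,T}\|_{L^2(w)\to L^2(w)}\le c_nKp\,[w]_{A_2}$. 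Taking $p=2$ and combining with the reduction above gives $\|Tf\|_{L^2(w)}\le c_n(\|T\|_{L^1\to L^{1,\infty}}+K)[w]_{A_2}^2\|f\|_{L^2(w)}$, which is \eqref{qb}.

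The main obstacle is precisely this second step: showing that the grand maximal truncation of $M_{p,T}$ is of weak type $(1,1)$ with the correct $O(Kp)$ constant. Unlike a Calder\'on--Zygmund operator, $M_{p,T}$ is not bounded on $L^{\infty}$ --- the tail of a rough kernel applied to a bounded function diverges --- so the passage from weak $(1,1)$ to $L^2(w)$ cannot be effected by interpolation against an $L^\infty$ endpoint and must be extracted solely from the sup-of-averages structure of $M_{p,T}$. Keeping the dependence on $p$ linear while doing so, so that the final exponent on $[w]_{A_2}$ is exactly $2$, is the delicate part of the argument.
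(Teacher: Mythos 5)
Your first step is essentially sound and is close in spirit to what the paper does via \cite{L}: a local mean oscillation argument does yield a sparse domination of $T$ in which the local part carries $\|T\|_{L^1\to L^{1,\infty}}$ and the tails are controlled by $M_{p,T}$. The fatal problem is your second step, the claim that $\|M_{p,T}\|_{L^2(w)\to L^2(w)}\le cKp\,[w]_{A_2}$ with \emph{linear} dependence on $[w]_{A_2}$. You reduce this to a weak type $(1,1)$ bound for the grand maximal truncation operator $\mathcal{M}_{M_{p,T}}g(x)=\sup_{Q\ni x}\esssup_{\xi\in Q}M_{p,T}(g\chi_{\mathbb{R}^n\setminus 3Q})(\xi)$ and then proceed ``granting that'' it holds. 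This is not a deferrable technicality; it is the entire difficulty, and it cannot follow from the stated hypotheses. Indeed, fix $Q$ and $\xi\in Q$: since $g\chi_{\mathbb{R}^n\setminus 3Q}$ vanishes at distance $\ge \ell_Q$ from $\xi$, letting the inner cube $Q'\ni\xi$ in the definition of $M_{p,T}$ shrink to $\xi$ gives $M_{p,T}(g\chi_{\mathbb{R}^n\setminus 3Q})(\xi)\ge |T(g\chi_{\mathbb{R}^n\setminus 3Q})(\xi)|$ at a.e.\ Lebesgue point. Hence $\mathcal{M}_{M_{p,T}}$ pointwise dominates the grand maximal truncation operator $\mathcal{M}_T$ of $T$ itself, and a weak type $(1,1)$ bound for $\mathcal{M}_{T_{\Omega}}$ with rough $\Omega$ would, by the known pointwise sparse domination technique, yield the \emph{linear} $A_2$ bound for $T_{\Omega}$ --- precisely the conjecture of \cite{HRT} that this paper pointedly does not resolve. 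The hypotheses only control the $L^q$-average truncations $M_{q,T}$ for finite $q$, never sup-truncations, and that distinction is exactly what separates quadratic from linear here. (This is also why, in the proof of Theorem~\ref{sharpbound}, estimate \eqref{point2} concerns $M_{p,(M_{1,T_{\Omega}})}$ with finite $p$: the term $M_{2R}(T_{\Omega}(f\chi_{{\mathbb R}^n\setminus 3R}))(y)$ appearing there can be integrated in $L^p(R)$ using the $L^p$-boundedness of $M$, but it cannot be bounded in $L^\infty(R)$, since $T_{\Omega}(f\chi_{{\mathbb R}^n\setminus 3R})$ need not be bounded on $2R$ for rough kernels.)

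You should also note that the exponent $2$ in \eqref{qb} arises in the paper by a different mechanism than the ``linear $\times$ linear'' factorization you aim for. The paper cites \cite[Cor.~3.2]{L} for a \emph{bilinear} sparse form, $|\langle Tf,g\rangle|\le C_n(\|T\|_{L^1\to L^{1,\infty}}+Kp')\sum_{Q\in\mathcal{S}}\langle f\rangle_{1,Q}\langle g\rangle_{p,Q}|Q|$ for all $p>1$, whose constant blows up like $p'$ as $p\to 1$; the hypothesis for all exponents $\ge 2$ enters through the dual exponent $p'$, not through a fixed $p=2$. The argument of \cite{CCPO} (Corollary A1) then converts this family of bounds into \eqref{qb} by choosing $p$ in terms of the weight, $p'\sim [w]_{A_\infty}\lesssim [w]_{A_2}$, via sharp reverse H\"older: one factor of $[w]_{A_2}$ comes from the sparse form itself and the second from the blow-up $Kp'$. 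By freezing $p=2$ at the outset you discard this mechanism, which is what forces you into claiming an unavailable linear weighted bound for $M_{p,T}$.
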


\begin{proof} This is just a combination of several known facts. By \cite[Cor.~3.2]{L},
for every suitable $f,g$, there exists a sparse family ${\mathcal S}$ such that
$$|\langle Tf,g \rangle|\le C_n(\|T\|_{L^1\to L^{1,\infty}}+Kp')\sum_{Q\in {\mathcal S}}\langle f\rangle_{1,Q}\langle g\rangle_{p,Q}|Q|\quad(p>1).$$
But it was shown in \cite{CCPO} (see the proof of Corollary A1 there) that this sparse bound implies (\ref{qb}).
\end{proof}

In particular, $T_{\O}$ with $\O\in L^{\infty}$ satisfies the hypothesis of Proposition~\ref{pc1}, namely, it was proved in \cite{L} that
\begin{equation}\label{mp}
\|M_{p,T_{\O}}f\|_{L^{1,\infty}}\le C_n\|\O\|_{L^{\infty}}p\|f\|_{L^1}\quad (p\ge 1).
\end{equation}

\section{Proof of Theorem \ref{sharpbound}}
First, by a general extrapolation argument found in \cite{LPR}, the sharpness of (\ref{comp})
follows from $\|M\!\circ T_{\O}\|_{L^p\to L^p}\ge \frac{c}{(p-1)^2}$ as $p\to 1$. The latter relation holds for a subclass of
$T_{\O}$ with kernels satisfying the standard nondegeneracy assumptions. In particular, it can be easily checked for the Hilbert transform.

Turn to the proof of (\ref{comp}). By homogeneity, one can assume that $\|\O\|_{L^{\infty}}=1$. The proof is based on two pointwise estimates:
\begin{equation}\label{point1}
M(T_{\O}f)(x)\lesssim MMf(x)+M_{1,T_{\O}}f(x)
\end{equation}
and
\begin{equation}\label{point2}
M_{p,(M_{1,T_{\O}})}f(x)\lesssim Mf(x)+M_{p,T_{\O}}f(x)\quad(p\ge 2)
\end{equation}
(we use the usual notation $A\lesssim B$ if $A\le C_nB$).

Let us show first how to complete the proof using these estimates. By~(\ref{mp}), $M_{1,T_{\O}}$ is of weak type $(1,1)$. Applying (\ref{mp}) again
along with~(\ref{point2}) yields $\|M_{p,(M_{1,T_{\O}})}\|_{L^1\to L^{1,\infty}}\lesssim p$. Therefore, by Proposition~\ref{pc1},
$$\|M_{1,T_{\O}}\|_{L^2(w)\to L^2(w)}\lesssim [w]_{A_2}^2.$$
This estimate combined with (\ref{point1}) and Buckley's linear bound for $M$~\cite{B} implies (\ref{comp}).

It remains to prove (\ref{point1}) and (\ref{point2}). We start with (\ref{point1}). This estimate follows from the definition of $M_{1,T_{\O}}$ and
the standard fact that for every cube $Q$ containing the point $x$,
\begin{equation}\label{llog}
\frac{1}{|Q|}\int_{Q}|T_{\O}(f\chi_{3Q})|\lesssim MMf(x).
\end{equation}

For the sake of completeness we outline the proof of (\ref{llog}). Combining the weak type $(1,1)$ and the $L^2$ boundedness of $T_{\O}$ (see \cite{CZ,S}) with interpolation and
Yano's extrapolation \cite[p. 43]{G}, we obtain
$$\frac{1}{|Q|}\int_{Q}|T_{\O}(f\chi_{3Q})|\lesssim \|f\|_{L\log L,3Q}.$$
By Stein's $L\log L$ result \cite{St},
$$\|f\|_{L\log L,Q}\lesssim \frac{1}{|Q|}\int_QMf,$$
which, along with the previous estimate, implies (\ref{llog}).

Turn to the proof of (\ref{point2}).
Let $R$ be an arbitrary cube containing the point~$x$.
Let $y\in R$ and let $Q$ be an arbitrary cube containing $y$.

Assume that $\ell_Q\le \frac{1}{2}\ell_R$. Then $Q\subset 2R$ and $3Q\subset 3R$. Hence,
\begin{equation}\label{case1}
\frac{1}{|Q|}\int_Q|T_{\O}(f\chi_{{\mathbb R}^n\setminus (3R\cup 3Q)})|\le M_{2R}(T_{\O}(f\chi_{{\mathbb R}^n\setminus 3R}))(y).
\end{equation}

Suppose now that $\ell_R<2\ell_Q$. Then $R\subset 5Q$ and $3R\subset 9Q$. We obtain
$$|T_{\O}(f\chi_{15Q\setminus (3R\cup 3Q)})\chi_Q(z)|\lesssim \frac{1}{|Q|}\int_{15Q}|f|\lesssim Mf(x).$$
Also,
$$\frac{1}{|Q|}\int_Q|T_{\O}(f\chi_{{\mathbb R}^n\setminus 15Q})|\lesssim \frac{1}{|5Q|}\int_{5Q}|T_{\O}(f\chi_{{\mathbb R}^n\setminus 15Q})|\lesssim M_{1, T_{\O}}f(x),$$
and therefore,
$$
\frac{1}{|Q|}\int_Q|T_{\O}(f\chi_{{\mathbb R}^n\setminus (3R\cup 3Q)})|dz\lesssim Mf(x)+M_{1,T_{\O}}f(x).
$$

This estimate, combined with (\ref{case1}), implies
\begin{eqnarray*}
&&M_{1,T_{\O}}(f\chi_{{\mathbb R}^n\setminus 3R})(y)=\sup_{Q\ni y}\frac{1}{|Q|}\int_Q|T_{\O}(f\chi_{{\mathbb R}^n\setminus (3R\cup 3Q)})|\\
&&\lesssim M_{2R}(T_{\O}(f\chi_{{\mathbb R}^n\setminus 3R}))(y)+Mf(x)+M_{1,T_{\O}}f(x).
\end{eqnarray*}
Therefore, by the $L^p$-boundedness of $M$,
\begin{eqnarray*}
\left(\frac{1}{|R|}\int_RM_{1,T_{\O}}(f\chi_{{\mathbb R}^n\setminus 3R})^pdy\right)^{1/p}&\lesssim& \left(\frac{1}{|R|}\int_{2R}|T_{\O}(f\chi_{{\mathbb R}^n\setminus 3R})|^pdy\right)^{1/p}\\
&+&Mf(x)+M_{1,T_{\O}}f(x).
\end{eqnarray*}
Combining this estimate with
$$|T_{\O}(f\chi_{{\mathbb R}^n\setminus 3R})\chi_{2R}(y)|\lesssim Mf(x)+|T_{\O}(f\chi_{{\mathbb R}^n\setminus 6R})\chi_{2R}(y)|$$
and using also that, by H\"older's inequality, $M_{1,T_{\O}}f\le M_{p,T_{\O}}f$, we obtain
$$
\left(\frac{1}{|R|}\int_RM_{1,T_{\O}}(f\chi_{{\mathbb R}^n\setminus 3R})^pdy\right)^{1/p}\lesssim Mf(x)+M_{p,T_{\O}}f(x),
$$
which proves (\ref{point2}), and therefore, Theorem \ref{sharpbound} is completely proved.

\end{document}